\setlist{nolistsep}
\newcommand{\R}{{\mathrm I}\kern-0.18em{\mathrm R}}
\newcommand{\h}{{\mathrm I}\kern-0.18em{\mathrm H}}
\newcommand{\K}{{\mathrm I}\kern-0.18em{\mathrm K}}
\newcommand{\p}{{\mathrm I}\kern-0.18em{\mathrm P}}
\newcommand{\E}{{\mathrm I}\kern-0.18em{\mathrm E}}
\newcommand{\1}{{\mathrm 1}\kern-0.24em{\mathrm I}}
\newcommand{\N}{{\mathrm I}\kern-0.18em{\mathrm N}}
\newcommand{\ktau}{\boldsymbol \tau}
\newcommand{\krho}{\boldsymbol \rho}
\newcommand{\diag}{\mathrm{diag}}
\newcommand{\cN}{\mathcal{N}}
\newcommand{\bA}{\mathbf{A}}
\newcommand{\bM}{\mathbf{M}}
\newcommand{\bW}{\mathbf{W}}
\newcommand{\bX}{\mathbf{X}}
\newcommand{\sD}{\mathsf{D}}
\DeclareMathOperator{\Var}{Var}
\DeclareMathOperator{\Tr}{Tr}
\DeclareMathOperator{\card}{card}
\DeclareMathOperator{\sign}{sign}
\newtheorem{theorem}{Theorem}
\newtheorem{proposition}[theorem]{Proposition}
\newtheorem{lemma}[theorem]{Lemma}
\title{Mar\v{c}enko-Pastur Law for Kendall's Tau}
\author{Afonso S.~ Bandeira\thanks{NYU, Courant Institute of Mathematical Sciences: \texttt{bandeira@cims.nyu.edu}}, Asad Lodhia\thanks{MIT, Department of Mathematics: \texttt{lodhia@math.mit.edu}}, and Philippe Rigollet\thanks{MIT, Department of Mathematics: \texttt{rigollet@math.mit.edu}}}
\begin{document}
\maketitle
\begin{abstract}
We prove that Kendall's Rank correlation matrix converges to the Mar\v{c}enko
Pastur law, under the assumption that observations are i.i.d random vectors $X_1, \ldots, X_n$
with  components that are independent and  absolutely continuous with respect to the Lebesgue
measure. This is the first result on the empirical spectral distribution of a multivariate $U$-statistic.
\end{abstract}
\section{Introduction}
Estimating the association between two random variables $X,Y \in \R$ is a central statistical problem. As such many methods have been proposed, most notably Pearson's correlation coefficient. While this measure of association is well suited to the Gaussian case, it may be inaccurate in other cases. This observation has led statisticians to consider other measures of associations such as Spearman's $\rho$ and Kendall's $\tau$ that can be proved to be more robust to heavy-tailed distributions (see, e.g., \cite{LiuHanYua12}). 
%More recently,  Donoho~\cite{Don15} argued in favor of simple threshold rules that consists in dropping much of the magnitude information, much in the spirit of Kendall's $\tau$.
In a multivariate setting, covariance and correlation matrices are preponderant tools to understand the interaction between variables. They are also used as building blocks for more sophisticated statistical questions such as principal component analysis or graphical models. 

The past decade has witnessed an unprecedented and fertile interaction between random matrix theory and high-dimensional statistics (see~\cite{PauAue14} for a recent survey). Indeed, in  high-dimensional settings, traditional asymptotics where the sample size tends to infinity fail to capture a delicate interaction between sample size and dimension and random matrix theory has allowed statisticians and practitioners alike to gain valuable insight on a variety of multivariate problems.

The terminology ``Wishart matrices" is often, though sometimes abusively, used to refer to $p \times p$ random matrices of the form $\bX^\top \bX/n$, where $\bX$ is an $n \times p$  random matrix with independent rows (throughout this paper we restrict our attention to real random matrices). The simplest example arises where $\bX$ has i.i.d standard Gaussian entries but the main characteristics are shared by a much wider class of random matrices. This universality phenomenon manifests itself in various aspects of the limit distribution, and in particular in the limiting behavior of the empirical spectral distribution of the matrix. Let $\bW=\bX^\top \bX/n$ be a $p \times p$ Wishart matrix and denote by $\lambda_1, \ldots, \lambda_p$ its eigenvalues; then the empirical spectral distribution $\hat \mu_p$ of $\bW$ is the distribution on $\R$ defined as the following mixture of Dirac point masses at the $\lambda_j$s:
\[
\hat \mu_p=\frac{1}{p} \sum_{k=1}^p \delta_{\lambda_k}\,.
\]
Assuming that the entries of $\bX$ are independent, centered and of unit variance, it can be shown that $\mu_p$ converges weakly to the Mar\v{c}enko-Pastur distribution under weak moment conditions (see \cite{ErdKnoYau12} for the weakest condition).

While this development alone has led to important statistical advances, it fails to capture more refined notions of correlations, notably more robust ones involving ranks and therefore dependent observations. A first step in this direction was made by \cite{YinKri86}, where  the matrix $\bX$ is assumed to have independent rows with isotropic distribution. More recently, this result was extended in~\cite{BaiZho08, ORo12} and covers for example the case of Spearman's $\krho$ matrix that is based on ranks, which is also a Wishart matrix of the form $\bX^\top \bX/n$. 

The main contribution of this paper is to derive the limiting distribution of Kendall's $\ktau$ matrix, a cousin of Spearman's $\krho$ matrix but which is \emph{not} of the Wishart type but rather a matrix whose entries are $U$-statistics. Kendall's $\ktau$ matrix is a very popular surrogate for correlation matrices but an understanding the fluctuations of its eigenvalues is still missing. Interestingly, Mar\v{c}enko-Pastur results have been used as heuristics, without justification, precisely for Kendall's $\ktau$ in the context of certain financial applications~\cite{CreCreLav15}.

As it turns out, the limiting distribution of $\hat \mu_p$ is not exactly Mar\v{c}enko-Pastur, but rather an affine transformation of it. Our main theorem below gives the precise form of this transformation.
\begin{theorem}
\label{thm:main}
Let $X_1, \ldots, X_n$, be $n$ independent
random vectors in $\R^p$ whose components $X_i(k)$ are independent
random variables that have a density with respect to the Lebesgue 
measure on $\R$. Then as $n \to \infty$ and $\frac{p}{n} \to \gamma >0$ the empirical 
spectral distribution of $\ktau$ converges in probability to 
\[
\frac{2}{3}Y + \frac{1}{3}\,,
\]
where $Y$ is distributed according to the standard Mar\v{c}enko-Pastur law with parameter $\gamma$ 
(see Theorem \ref{thm:mplaw}  for the appropriate definition).
\end{theorem}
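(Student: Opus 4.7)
The plan is to reduce the theorem to the standard Mar\v{c}enko--Pastur theorem (Theorem~\ref{thm:mplaw}) via a Hoeffding decomposition of Kendall's $\ktau$, controlling the remainder in Frobenius norm and then invoking a Hoffman--Wielandt type inequality to transfer spectral convergence.

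First, since each component $X_i(k)$ has a continuous distribution $F_k$, the variables $U_i(k) := 2F_k(X_i(k)) - 1$ are i.i.d.\ uniform on $[-1,1]$, and by the independence hypothesis on components they are also independent across the coordinate index $k$. Setting $V_i(k) := \sqrt{3}\, U_i(k)$ gives $\E V_i(k) = 0$ and $\E V_i(k)^2 = 1$; write $\bV \in \R^{n\times p}$ for the resulting matrix and $\bW := \bV^\top \bV/n$, which is a bounded Wishart matrix falling under the scope of Theorem~\ref{thm:mplaw}.

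Second, for $k\neq l$ I would apply the Hoeffding decomposition to the order-two U-statistic
\[
\ktau_{kl} = \binom{n}{2}^{-1}\sum_{i<j} \sign\bigl(X_i(k)-X_j(k)\bigr)\,\sign\bigl(X_i(l)-X_j(l)\bigr).
\]
The coordinate independence makes the first Hoeffding projection of the kernel factorize as $h_1(x) = (2F_k(x(k))-1)(2F_l(x(l))-1)$, so that
\[
\ktau_{kl} \;=\; \tfrac{2}{n}\sum_{i=1}^{n} U_i(k)\,U_i(l) \;+\; R_{kl} \;=\; \tfrac{2}{3}\,\bW_{kl} \;+\; R_{kl},
\]
where $R_{kl}$ is a canonical (completely degenerate) order-two U-statistic with bounded kernel, hence $\E R_{kl}=0$ and $\Var(R_{kl}) = O(n^{-2})$ by the classical variance formula for degenerate U-statistics. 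Because $\ktau_{kk}=1$ identically while $\bW_{kk} = n^{-1}\sum_i V_i(k)^2$ concentrates at $1$, it is natural to write
\[
\ktau \;=\; \tfrac{2}{3}\,\bW \;+\; \tfrac{1}{3}\,\Id \;+\; \bE,
\]
with $\bE_{kl}=R_{kl}$ for $k\neq l$ and $\bE_{kk} = \tfrac{2}{3}(1-\bW_{kk})$.

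Third, I would establish that $\tfrac{1}{p}\|\bE\|_F^2 \to 0$ in probability. Summing $p(p-1)$ off-diagonal entries each of second moment $O(n^{-2})$ gives $\E \sum_{k\neq l} R_{kl}^2 = O(p^2/n^2)$; dividing by $p$ yields $O(p/n^2)=O(\gamma/n)\to 0$. Each diagonal entry has variance $O(1/n)$ by the CLT applied to $\bW_{kk}$, contributing $O(p/n)$ to $\|\bE\|_F^2$ and hence $O(1/n)$ to its normalized form. A Hoffman--Wielandt bound of the form $L(F_\bA,F_\bB)^3 \le p^{-1}\|\bA-\bB\|_F^2$ then shows that the empirical spectral distribution of $\ktau$ has vanishing L\'evy distance to that of $\tfrac{2}{3}\bW + \tfrac{1}{3}\Id$. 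Theorem~\ref{thm:mplaw} applied to $\bW$ (whose entries are bounded, so every moment condition holds trivially) yields weak convergence to the Mar\v{c}enko--Pastur law, which via the affine map $y\mapsto \tfrac{2}{3}y+\tfrac{1}{3}$ gives the claimed limit.

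The main obstacle, I expect, is the Frobenius control of the degenerate part $R_{kl}$. Although the entries $R_{kl}$ are far from independent across $(k,l)$ (they are built from the \emph{same} sample $X_1,\dots,X_n$), for an $\E\|\bE\|_F^2$ estimate only individual second moments enter, so cross-correlations do not appear and the $\Var(R_{kl})=O(n^{-2})$ bound for canonical bounded order-two U-statistics is the lone substantive input. This is the one place where the U-statistic structure (rather than a plain Wishart-type sum) must be handled with care, and it is precisely what separates Kendall's $\ktau$ from the Spearman $\krho$ setting treated by earlier work.
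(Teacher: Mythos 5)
Your proposal is correct and follows essentially the same route as the paper: a Hoeffding decomposition isolating the H\'ajek projection $\frac{2}{n}\sum_i U_i(k)U_i(l)$, Frobenius-norm control of the degenerate remainder via orthogonality across distinct pairs, and the L\'evy-distance lemma (Lemma~\ref{thm:hoffwiel}) to transfer the Mar\v{c}enko--Pastur limit of the Wishart part. The only difference is bookkeeping: you decompose each entry $\ktau_{kl}$ as a scalar $U$-statistic, whereas the paper decomposes the vectors $\sign(X_i-X_j)$ and forms outer products, so your canonical degenerate kernel absorbs the cross term $\sum_{i\neq j} U_i\otimes U_j$ that the paper must bound separately.
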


Figure~\ref{FIG:mp} illustrates numerically the result of Theorem~\ref{thm:main}.

\begin{figure}[h]
\centering{
\includegraphics[width = 0.8\textwidth]{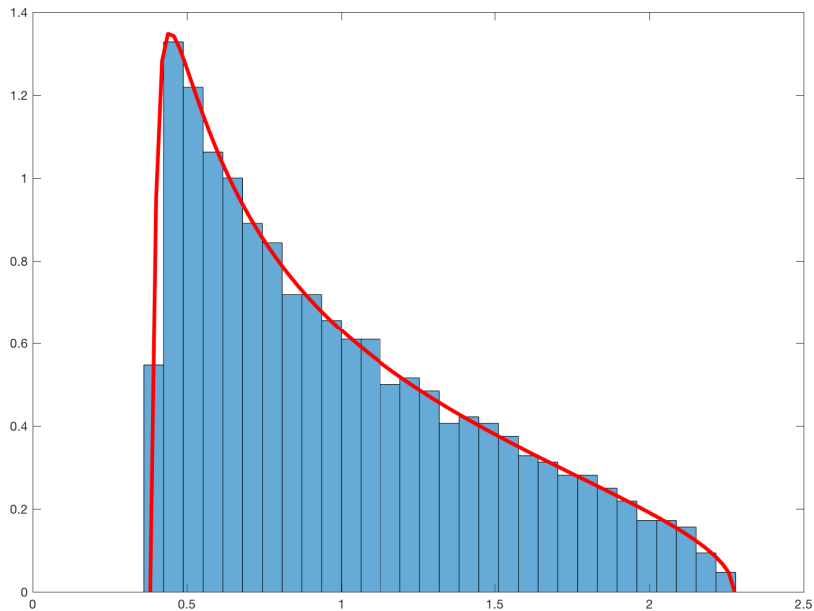}
\caption{Histogram of the eigenvalues the Kendall $\ktau$ matrix for $n=2,000, p=1,000$. The superimposed red line is the probability density function $\frac{2}{3}Y + \frac{1}{3}$, where $Y$ is distributed according to the standard Mar\v{c}enko-Pastur law with parameter $\gamma=1/2$.}
\label{FIG:mp}}
\end{figure}

\medskip \textsc{Notation:} For any integer $k \ge 1$ we write $[k]=\{1, \ldots, k\}$. We denote by $I_p$ the identity matrix of $\R^p$. For a vector $x \in \R^p$, we denote by $x(j)$ it's $j$th coordinate. For any $p \times p$ matrix $M$, we denote by $\diag(M)$ the $p \times p$ diagonal matrix with the same diagonal elements as $M$ and any real number $r$, we define  $\sD_r(M)=M-\diag(M)+rI_p$. In other words, the operator $\sD_r$ replaces each diagonal element of a matrix by the value $r$.
We denote $\sign$ the sign function with convention that $\sign(0)=1$. We define the Frobenius norm of a $p \times p$ matrix $H$ as  $\|H\|_{\mathrm{F}}^2 := \Tr(H^\top H)$. Finally, we define $\mathsf{Unif}([a,b])$ to be the uniform distribution on the interval $[a,b]$.

\section{Kendall's Tau}
\label{SEC:kendall}

The (univariate) Kendall $\tau$ statistic~\cite{Ess24,Lin25, Lin29,Ken38} is defined as follows.  Let $(Y_1,Z_1), \ldots, (Y_n,Z_n)$ be $n$ independent samples of a pair $(Y,Z) \in \R\times \R$ of real-valued random variables. Then the (empirical) Kendall $\tau$ between $Y$ and $Z$ is defined as
\[
\tau(Y,Z)=\frac{1}{{\binom{n}{2}}}\sum_{1\le i<j\le n} \sign(Y_i-Y_j)\cdot\sign(Z_i-Z_j)\,.
\]
The statistic $\tau$ takes values in $[-1,1]$ and it is not hard to see that it can be expressed as
\[
\tau= \frac{1}{\binom{n}{2}}\left(\#\left\{\text{concordant pairs}\right\}  -   \#\left\{\text{discordant pairs}\right\}\right)\,,
\]
Where a pair $(i,j)$ is said to be \emph{concordant} if $Y_i-Y_j$ and $Z_i-Z_j$ have the same sign and \emph{discordant} otherwise.

It is known  that the Kendall $\tau$ statistic is asymptotically Gaussian (see, e.g.,~\cite{Ken38}). Specifically, if $Y$ and $Z$ are independent, then as $n \to \infty$, 
\begin{equation}
\label{EQ:clt_tau}
\sqrt{n} \tau(Y,Z) \rightsquigarrow \cN\left(0,\frac{4}{9}\right) \,.
\end{equation}
This property has been central to construct independence tests between two random variables $X$ and $Y$ (see, e.g,~\cite{KenGib90}).

Kendall's $\tau$ stastistic can be extended to the multivariate case. Let $X_1, \ldots, X_n$, be $n$ independent copies of a random vector $X\in \R^p$, with independent coordinates $X(1), \ldots, X(p)$. The (empirical) \textbf{Kendall} $\ktau$ matrix of $X$ is defined to be the $p \times p$ matrix whose entries $\ktau_{kl}$ are given by
\begin{equation}
\label{eqn:ktau} 
\ktau_{kl} := \tau(X(k),X(l))=\frac{1}{\binom{n}{2}}\sum_{1\le i<j\le n} \sign\big(X_i(k)-X_j(k)\big)\cdot \sign\big(X_i(l)-X_j(l)\big)\quad  1\le k, l \le p\,.
\end{equation}
Note that the $\ktau$ can be written as the sum of $\binom{n}{2}$ rank-one random matrices:
\begin{equation}
\label{EQ:defTau_outer}
\ktau=\frac{1}{\binom{n}{2}}\sum_{1\le i<j\le n} \sign\big(X_i-X_j\big)\otimes \sign\big(X_i-X_j\big)\,,
\end{equation}
where the $\sign$ function is taken entrywise.

It is easy to see that  $\ktau_{ii} = 1$ for all $i$. Together with~\eqref{EQ:clt_tau}, it implies that the matrix 
\[
\bar \ktau=\frac{3}{2}\ktau-\frac{1}{2}I_p
\] 
is such that $\E[\sqrt{n}\bar \ktau]\to I_p$ and   $\Var[\bar \ktau_{ij}]\to \1\{i \neq j\}$ as $n \to \infty$. This suggests that if the empirical spectral distribution of $\bar \ktau$ converges to a Mar\v{c}enko-Pastur distribution, it should be a \emph{standard} Mar\v{c}enko-Pastur distribution. This heuristic argument supports the affine transformation arising in Theorem~\ref{thm:main}. However, the matrix $\ktau$ is \emph{not}  Wishart and the Mar\v{c}enko-Pastur limit distribution does not follow from standard arguments. Nevertheless,  Kendall's $\tau$ is a $U$-statistic which are known to satisfy the weakest form of universality, namely a Central Limit Theorem under general conditions~\cite{Hoe48, PenGin99}. In this paper, we show that in the case of the Kendall $\ktau$ matrix, this universality phenomenon extends to the empirical spectral distribution.

\section{Proof of Theorem~\ref{thm:main}}

For any pair $(i,j)$ such that $1\le i,j \le n$ and $i \neq j$, let $\bA_{(i,j)}$ be the vector
\[
\bA_{(i,j)} := \sign\big(X_i-X_j\big)\,,
\]
and recall from~\eqref{EQ:defTau_outer} that 
\[
\ktau=\frac{1}{\binom{n}{2}}\sum_{1\le i<j\le n}\bA_{(i,j)}\otimes \bA_{(i,j)}\,.
\]

Akin to most asymptotic results on $U$-statistics, we utilize a variant of Hoeffding's (a.k.a. Efron-Stein, a.k.a ANOVA) decomposition~\cite{Hoe48}:
\begin{equation}
\label{EQ:ANOVA}
\bA_{(i,j)}=\bar \bA_{(i,j)}+ \bar \bA_{(i,\cdot)}+ \bar \bA_{(\cdot,j)}
\end{equation}
where
\[
\bar \bA_{(i,\cdot)}:=\E\big[\bA_{(i,j)}\big| X_i\big]\,, \quad \bar \bA_{(\cdot,j)}:=\E\big[\bA_{(i,j)}\big| X_j\big]\quad \hbox{and}\quad 
\bar \bA_{(i,j)}:= \bA_{(i,j)} - \bar \bA_{(\cdot,j)} - \bar \bA_{(i,\cdot)}\,.
\]
It is easy to check that each of the vectors in the right-hand side of~\eqref{EQ:ANOVA} are centered and are orthogonal to each other with respect to the inner product $\E [v^\top w]$ where $v,w \in \R^p$. These random vectors can be expressed conveniently thanks to the following Lemma.
\begin{lemma}
\label{lem:Abar}
For $k \in [p]$, let $F_{k}$ denote the cumulative distribution function of $X(k)$.
Fix $i \in [n]$ and let $U_i \in \R^p$ is a random vector with $k$th coordinate given by $U_i(k)= 2F_{k}(X_i(k)) - 1\sim \mathsf{Unif}([-1,1])$. Then
\[
\bar \bA_{(i,\cdot)}= - \bar \bA_{(\cdot,i)}= U_i\,.
\]
\end{lemma}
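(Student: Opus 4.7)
The plan is to reduce the claim to a coordinate-wise calculation and then to invoke the probability integral transform. The $k$-th coordinate of $\bA_{(i,j)}$ is $\sign(X_i(k)-X_j(k))$, and since $X_j$ is independent of $X_i$, the conditional expectation $\E[\sign(X_i(k)-X_j(k))\mid X_i]$ depends on $X_i$ only through $X_i(k)$. (Incidentally, this does not even require the independence of the coordinates of $X_i$; that assumption enters only in the subsequent sections.)

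The next step is to evaluate $\E[\sign(X_i(k)-x)]$ with $x=X_i(k)$ held fixed. Because $X_j(k)$ has a density with respect to Lebesgue measure, the event $\{X_j(k)=x\}$ is null, so the value of $\sign$ at $0$ is irrelevant and one computes
\[
\E\big[\sign(x-X_j(k))\big] \;=\; \p(X_j(k)<x) - \p(X_j(k)>x) \;=\; 2F_k(x)-1.
\]
Substituting $x=X_i(k)$ gives the $k$-th coordinate of $\bar\bA_{(i,\cdot)}$ equal to $2F_k(X_i(k))-1=U_i(k)$, which is the first identity. The second identity $\bar\bA_{(\cdot,i)}=-U_i$ follows immediately from the antisymmetry $\bA_{(j,i)}=-\bA_{(i,j)}$: conditioning on $X_i$ and using the computation above (with the roles of the two indices swapped) yields $\bar\bA_{(\cdot,i)}=-\bar\bA_{(i,\cdot)}=-U_i$.

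It remains to verify that $U_i(k)\sim\mathsf{Unif}([-1,1])$. Since $F_k$ is the CDF of a continuous random variable, the probability integral transform ensures that $F_k(X_i(k))\sim\mathsf{Unif}([0,1])$, hence $U_i(k)=2F_k(X_i(k))-1\sim\mathsf{Unif}([-1,1])$ as claimed.

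There is essentially no obstacle in this argument; the only subtlety is making sure that the absolute continuity hypothesis of Theorem~\ref{thm:main} is used precisely to discard the diagonal $\{X_i(k)=X_j(k)\}$ and to make $F_k$ continuous, and that the independence of $X_i$ and $X_j$ is what allows the conditional expectation to collapse to an expectation with $X_i(k)$ frozen.
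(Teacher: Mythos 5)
Your proof is correct and follows essentially the same route as the paper: condition on $X_i$, use absolute continuity to kill the event $\{X_i(k)=X_j(k)\}$ and compute $\E[\sign(X_i(k)-X_j(k))\mid X_i]=2F_k(X_i(k))-1$, then deduce $\bar\bA_{(\cdot,i)}=-\bar\bA_{(i,\cdot)}$ from the antisymmetry $\bA_{(j,i)}=-\bA_{(i,j)}$. The only additions are your explicit verification of $U_i(k)\sim\mathsf{Unif}([-1,1])$ via the probability integral transform and the (correct) remark that coordinate independence is not needed here, both of which the paper leaves implicit.
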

\begin{proof}
For any $i \in [n]$, observe that since the components of $X$ have a density, then $\p\big(X_i(k)=X_j(k)\big| X_i\big)=0$ so that
\[
\E\big[\sign\big(X_i(k)-X_j(k)\big)\big| X_i\big] =\p\big(X_i(k)>X_j(k))\big| X_i\big)-\p\big(X_i(k)<X_j(k))\big| X_i\big)=2F_k(X_i(k))-1\,.
\]
The observation that $\bar \bA_{(i,\cdot)} = -\bar \bA_{(\cdot, i)}$ follows from the fact that $\bA_{(i,j)} = -\bA_{(j,i)}$.
\end{proof}

Using~\eqref{EQ:ANOVA} we obtain the equality:
\begin{equation}
\label{EQ:ANOVA2}
\bA_{(i,j)}\otimes \bA_{(i,j)} = \bM^{(1)}_{(i,j)} + \bM^{(2)}_{(i,j)} + \Big(\bM^{(2)}_{(i,j)}\Big)^\top + \bM^{(3)}_{(i,j)},
\end{equation}
where
\begin{align*}
  \bM^{(1)}_{(i,j)} &:= I_p + \sD_0[ \{\bar \bA_{(i,\cdot)} + \bar \bA_{(\cdot, j)}\} \otimes \{ \bar \bA_{(i,\cdot)} + \bar \bA_{(\cdot, j)}\}] ,\\
  \bM^{(2)}_{(i,j)} &:=  \sD_0[ \bar \bA_{(i,j)} \otimes \{ \bar \bA_{(i,\cdot)} + \bar \bA_{(\cdot, j)}\}],\\
  \bM^{(3)}_{(i,j)} &:=  \sD_0[ \bar \bA_{(i,j)} \otimes \bar \bA_{(i,j)}].
\end{align*}
By the relation $\bar \bA_{(i,\cdot)} = - \bar \bA_{(\cdot,i)}$ from Lemma \ref{lem:Abar} we have 
\[
\sum_{1 \leq i < j \leq n} \{\bar \bA_{(i,\cdot)} + \bar \bA_{(\cdot, j)}\}\otimes \{\bar \bA_{(i,\cdot)} + \bar \bA_{(\cdot, j)}\} 
= (n-1)\sum_{i = 1}^n \bar \bA_{(i,\cdot)}\otimes \bar \bA_{(i,\cdot)}  -   \sum_{(i,j)\in [n]^2 : i \neq j}^n \bar \bA_{(i,\cdot)} \otimes  \bar \bA_{(j,\cdot)}\,.
\]
Using Lemma \ref{lem:Abar} yields:
\begin{equation}
\label{PR:lem:tausum2}
\frac{1}{\binom{n}{2}} \sum_{1 \le i < j \le n} \bM^{(1)}_{(i,j)} = I_p + \frac{2}{n} \sum_{i=1}^n \sD_0[U_i\otimes U_i]
-  \frac{1}{\binom{n}{2}} \sD_0\Bigg[\sum_{(i,j)\in [n]^2: i\neq j} U_i \otimes U_j\Bigg]\,.
\end{equation}

Next, note that, the coordinates of each $U_i, i=1, \ldots, n$ are mutually independent so that $\E[U_i]=0$ and
\begin{equation}
\label{EQ:expA}
\E\big[U_i \otimes  U_i\big]=\E[T^2]I_p=\frac{1}{3}I_p\,,
\end{equation}
where $T \sim \mathsf{Unif}([-1,1])$. Theorem \ref{thm:mplaw} implies as $n \to \infty$ and $\frac{p}{n} \to \gamma >0$, the empirical spectral distribution of
\[
\frac{2}{n}\sum_{i=1}^n U_i \otimes U_i
\]
converges in probability to $(2/3)Y$, where $Y$ is distributed according to the standard Mar\v{c}enko-Pastur law with parameter $\gamma$. Moreover, 
\begin{align*}
\frac{1}{p} \E \left\|\frac{2}{n}\sum_{i=1}^n\diag\big(U_i\otimes U_i\big)-\frac23 I_p\right\|_{\mathrm{F}}^2 &= \frac{4}{pn^2}\sum_{k=1}^p \E\Bigg(\sum_{i=1}^n\big\{U_i(k)^2 - \E\big[ U_i(k)^2 \big]\big\}\Bigg)^2 \leq \frac{C}{n} \to 0 \,, \\
\frac{1}{p} \E \left\|\frac{1}{\binom{n}{2}} \sD_0\Bigg[\sum_{(i,j)\in [n]^2 : i \neq j} U_i \otimes U_j\Bigg]\right\|_{\mathrm{F}}^2 &= \frac{1}{p\binom{n}{2}^2} \sum_{(k,l) \in [p]^2 : k \neq l} \E \Bigg\{\sum_{(i,j) \in [n]^2: i\neq j} U_i(k) U_j(l)\Bigg\}^2 \leq \frac{C p}{n^2} \to 0\,, 
\end{align*}
for some constant $C > 0$ independent of $n$. By Lemma \ref{thm:hoffwiel}, the normalized Frobenius norm bounds the L\'{e}vy distance between spectral measures. An application of Lemma \ref{thm:hoffwiel}, together with~\eqref{PR:lem:tausum2}, triangle inequality and the above bounds yields the following result.
\begin{proposition}
\label{prop:MP1}
As $n \to \infty$ and $\frac{p}{n} \to \gamma >0$, the empirical spectral distribution $\tilde \mu_p$ of
\[
\frac{1}{\binom{n}{2}}\sum_{1\le i<j\le n} \bM^{(1)}_{(i,j)}
\]
converges in probability to the law of $\frac{2}{3}Y+\frac{1}{3}$, where $Y$ is distributed according to the standard Mar\v{c}enko-Pastur law with parameter $\gamma$.
\end{proposition}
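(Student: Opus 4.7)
The plan is to use the explicit formula \eqref{PR:lem:tausum2} and expand the operator $\sD_0$ via $\sD_0[U_i\otimes U_i] = U_i \otimes U_i - \diag(U_i \otimes U_i)$. Doing so rewrites the matrix of interest as
\[
\bS_p \;:=\; \frac{2}{n}\sum_{i=1}^n U_i \otimes U_i \;+\; \tfrac{1}{3}\,I_p \;+\; E_1 \;-\; E_2,
\]
where $E_1 := \tfrac{2}{3}I_p - \tfrac{2}{n}\sum_{i=1}^n \diag(U_i\otimes U_i)$ and $E_2 := \tfrac{1}{\binom{n}{2}}\sD_0\!\big[\sum_{(i,j)\in[n]^2 : i\neq j} U_i\otimes U_j\big]$. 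The two displayed $L^2$ bounds just above the statement of the proposition give $\tfrac{1}{p}\E\|E_1\|_{\mathrm{F}}^2 \le C/n$ and $\tfrac{1}{p}\E\|E_2\|_{\mathrm{F}}^2 \le Cp/n^2$, both of which vanish as $n\to\infty$ with $p/n\to\gamma$.

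For the main piece, Lemma \ref{lem:Abar} implies that the coordinates of each $U_i$ are i.i.d.\ $\mathsf{Unif}([-1,1])$ (hence bounded, centered, with variance $1/3$) and that the $U_i$ are independent across $i$. Writing $V_i := \sqrt{3}\,U_i$, the rescaled vectors $V_i$ have i.i.d.\ centered unit-variance entries, and
\[
\frac{2}{n}\sum_{i=1}^n U_i\otimes U_i \;=\; \tfrac{2}{3}\cdot \tfrac{1}{n}\sum_{i=1}^n V_i\otimes V_i.
\]
By Theorem \ref{thm:mplaw}, the empirical spectral distribution of $\tfrac{1}{n}\sum_i V_i\otimes V_i$ converges in probability to the standard Mar\v{c}enko--Pastur law with parameter $\gamma$. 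The scalar multiplication by $2/3$ and the rigid translation by $\tfrac{1}{3}I_p$ act on each eigenvalue by $\lambda\mapsto \tfrac{2}{3}\lambda+\tfrac{1}{3}$, so the empirical spectral distribution of $\tfrac{2}{n}\sum_i U_i\otimes U_i + \tfrac{1}{3}I_p$ converges in probability to the law of $\tfrac{2}{3}Y + \tfrac{1}{3}$.

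Finally, the error terms are absorbed via Lemma \ref{thm:hoffwiel}. Markov's inequality applied to the $L^2$ bounds yields $\tfrac{1}{p}\|E_j\|_{\mathrm{F}}^2 \to 0$ in probability for $j\in\{1,2\}$, and the lemma then controls the L\'evy distance between the empirical spectral measures of $\bS_p$ and of $\tfrac{2}{n}\sum_i U_i \otimes U_i + \tfrac{1}{3}I_p$. A triangle inequality for the L\'evy metric concludes that $\tilde\mu_p$ has the advertised limit. There is no substantive obstacle: the points requiring care are merely (i) verifying that the hypotheses of Theorem \ref{thm:mplaw} are met for $\mathsf{Unif}([-1,1])$ entries, which is immediate since they are bounded with the correct first two moments, and (ii) tracking that the additive shift by $\tfrac{1}{3}I_p$ produces exactly the $+\tfrac{1}{3}$ translation appearing in the limit.
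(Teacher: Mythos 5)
Your proposal is correct and follows essentially the same route as the paper: expand $\sD_0[U_i\otimes U_i]$ to isolate $\tfrac{2}{n}\sum_i U_i\otimes U_i + \tfrac13 I_p$, apply Theorem~\ref{thm:mplaw} to the (rescaled) uniform vectors to get the $\tfrac23 Y$ piece, and kill the diagonal-fluctuation and cross terms with the two displayed Frobenius bounds via Lemma~\ref{thm:hoffwiel} and the triangle inequality for the L\'evy metric. The only difference is that you make explicit the normalization $V_i=\sqrt{3}\,U_i$ and the Markov step, which the paper leaves implicit.
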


Let $\hat \mu_p^{\ktau}$ denote the empirical spectral distribution of $\ktau$. Using Lemma \ref{thm:hoffwiel} once more, we show that the L\'evy distance between $\hat \mu_p^{\ktau}$ and $\tilde \mu_p$ converges to zero. This implies Theorem \ref{thm:main} by Proposition \ref{prop:MP1}. To that end, observe that by~\eqref{EQ:ANOVA2} and triangle inequality:
\begin{equation}
\label{EQ:limitfrob}
\frac{1}{p}\E\bigg\|\ktau - \frac{1}{\binom{n}{2}}\sum_{1\le i<j\le n} \bM^{(1)}_{(i,j)}\bigg\|_{\mathrm{F}}^2 \leq \frac{2}{p}\E\bigg\|\frac{1}{\binom{n}{2}}\sum_{1\le i<j\le n} \bM_{(i,j)}^{(2)}\bigg\|_{\mathrm{F}}^2  + \frac{1}{p}\E\bigg\|\frac{1}{\binom{n}{2}}\sum_{1\le i<j\le n} \bM_{(i,j)}^{(3)}\bigg\|_{\mathrm{F}}^2  .
\end{equation}

To show that~\eqref{EQ:limitfrob} goes to zero, we notice that the collection of matrices $\big\{ \bM_{(i,j)}^{(3)}\big\}_{1\le i<j\le n}$ satisfies
\begin{equation}
\label{EQ:ortho}
\E\Tr\Big\{\Big(\bM^{(3)}_{(i,j)}\Big)^\top \bM^{(3)}_{(i',j')}\Big\} =
\left\{\begin{array}{ll}
\E \| \bM_{(i, j)}^{(3)}\|_{\mathrm{F}}^2&\text{for}\ (i,j)=(i', j')\\
0&\text{otherwise}\,.
\end{array}\right.
\end{equation}
To see this, expand
\begin{equation}
\label{EQ:frobcalc}
\begin{split}
\E\Tr\Big\{\Big(\bM^{(3)}_{(i,j)}\Big)^\top \bM^{(3)}_{(i',j')}\Big\} = \sum_{(k,l) \in [p]^2 : k \neq l} \Big(&  \E[ \{\bA_{(i,j)}(k) - U_i(k) + U_j(k)\} \{ \bA_{(i',j')}(k) - U_{i'}(k) + U_{j'}(k)\}] \\
 &\times  \E[ \{\bA_{(i,j)}(l) - U_i(l) + U_j(l)\} \{ \bA_{(i',j')}(l) - U_{i'}(l) + U_{j'}(l)\}] \Big),
\end{split}
\end{equation}
and notice that each expectation is zero unless $(i,j) = (i',j')$ by Tower property and Lemma \ref{lem:Abar}.  Note that when $(i,j) = (i',j')$, the expression \eqref{EQ:frobcalc} is bounded by $Cp^2$ for some $C > 0$. The equation \eqref{EQ:ortho} also holds for the collection of matrices $\big\{ \bM^{(2)}_{(i,j)} \big\}_{1 \le i < j\le n}$ and we also have $\E \|\bM^{(2)}_{(i,j)}\|^2 \le Cp^2$ by a similar argument.
Therefore the right side of~\eqref{EQ:limitfrob} is bounded by:
\[ 
\frac{Cp}{\binom{n}{2}^2} \times \card\big\{(i,j,i',j') \in [n]^4\,:\, (i,j)=(i', j')\big\}\le \frac{Cp}{n^2},
\]
for some constant $C > 0$, which vanishes as $n \to \infty$. This concludes the proof of Theorem \ref{thm:main}.

\bigskip \textsc {Acknowledgements:} The authors would like to thank Alice Guionnet for helpful comments and discussions. A.~S.~B. was supported by Grant NSF-DMS-1541100. Part of this work was done while A.~S.~B.~ was with the Department of Mathematics at MIT. A.~L. was supported by Grant NSF-MS-1307704. P.~R was  supported by Grants NSF-DMS-1541099, NSF-DMS-1541100 and DARPA-BAA-16-46 and a grant from the MIT NEC Corporation.

\appendix
\section{The standard Mar\v{c}enko-Pastur law}
We include here the definition of the  standard Mar\v{c}enko-Pastur law and a bound on the distance between empirical spectral distributions of two matrices.

\begin{theorem}[Mar\v{c}enko-Pastur law {\cite[Theorem 3.6]{BaiSil10}}]
\label{thm:mplaw}
Let $X_1, \ldots, X_n$ be independent copies of a random vector $X \in \R^p$ such that\begin{equation*}
\E [X] = 0, \qquad \E [X\otimes X] = I_p\,.
\end{equation*}
Suppose that $n \to \infty, \frac{p}{n} \to \gamma >0$ and define $a = (1 - \sqrt{\gamma})^2$,
and $b = (1 + \sqrt{\gamma})^2$.  Then the empirical spectral distribution of the
matrix
\begin{equation*}
\frac{1}{n} \sum_{i=1}^n X_i \otimes X_i,
\end{equation*}
converges almost surely to the standard Mar\v{c}enko-Pastur law which has density:
\begin{equation*}
p_\gamma(x) = \begin{cases}
\displaystyle \frac{1}{2 \pi x\gamma} \sqrt{(b - x) (x - a)}, \qquad &\text{if } a \leq x \leq b,\\
 0, \qquad &\text{otherwise},
\end{cases}
\end{equation*}
and has a point mass $1 - \frac{1}{\gamma}$ at the origin if $\gamma > 1$. \end{theorem}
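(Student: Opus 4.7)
The plan is to use the Stieltjes transform method, which is the cleanest modern route to Mar\v{c}enko-Pastur limits for matrices of the form $\bW = \frac{1}{n}\sum_{i=1}^n X_i \otimes X_i$ since such matrices admit a natural rank-one decomposition into the summands $\frac{1}{n} X_i \otimes X_i$. For $z$ in the complex upper half-plane, let $\bG(z) = (\bW - z I_p)^{-1}$ denote the resolvent and $m_n(z) = \frac{1}{p}\Tr \bG(z)$ the Stieltjes transform of the empirical spectral distribution of $\bW$. The target is to show that, for each such $z$, $m_n(z) \to m_\gamma(z)$ almost surely, where $m_\gamma$ is characterized as the unique solution with positive imaginary part of the Mar\v{c}enko-Pastur equation $z\gamma\, m_\gamma(z)^2 - (1 - \gamma - z) m_\gamma(z) + 1 = 0$. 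Pointwise convergence of $m_n$ on a set with an accumulation point in the upper half-plane is equivalent to weak convergence of the empirical spectral distribution to the Mar\v{c}enko-Pastur law, and is promoted to almost sure weak convergence by a standard countable-dense argument.

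I would execute this in three steps. \textbf{Step 1 (Concentration of $m_n$).} Write $m_n(z) - \E m_n(z)$ as a telescoping sum of martingale differences $D_k = (\E_k - \E_{k-1}) m_n(z)$ with respect to the filtration generated by $X_1, \ldots, X_k$. A rank-one interlacing inequality yields $|D_k| \le (p\, \mathrm{Im}(z))^{-1}$, and Burkholder's inequality combined with a Borel-Cantelli argument then gives $m_n(z) - \E m_n(z) \to 0$ almost surely, uniformly over a dense countable subset of the upper half-plane. \textbf{Step 2 (Self-consistent equation).} For each $i$, set $\bW_i = \bW - \frac{1}{n}X_i \otimes X_i$ and $\bG_i(z) = (\bW_i - z I_p)^{-1}$. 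The identity $\bW \bG(z) = I_p + z\bG(z)$ combined with the Sherman-Morrison rank-one update formula yields
\[
1 + z\, m_n(z) = \frac{1}{p}\sum_{i=1}^n \frac{\frac{1}{n} X_i^\top \bG_i(z) X_i}{1 + \frac{1}{n} X_i^\top \bG_i(z) X_i}.
\]
A quadratic-form concentration lemma ensures $\frac{1}{n}X_i^\top \bG_i(z) X_i - \frac{1}{n}\Tr \bG_i(z) \to 0$, while a rank-one interlacing bound gives $\frac{1}{n}\Tr \bG_i(z) - \frac{p}{n}\, m_n(z) \to 0$; substituting and passing to the limit under $p/n \to \gamma$ produces the fixed-point equation for $m_\gamma$. \textbf{Step 3 (Identification of the density).} Solving the quadratic for $m_\gamma$ (choosing the root with positive imaginary part) and applying Stieltjes inversion $p_\gamma(x) = \pi^{-1}\lim_{\eta \downarrow 0} \mathrm{Im}\, m_\gamma(x + i\eta)$ recovers the stated density on $[a,b]$, while the point mass $1 - 1/\gamma$ at the origin when $\gamma > 1$ follows from a direct rank argument, since $\bW$ has at most $n$ nonzero eigenvalues and hence at least $p - n$ zero eigenvalues.

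The main obstacle is the quadratic-form concentration of $\frac{1}{n}X_i^\top \bG_i(z) X_i$ around its deterministic proxy $\frac{1}{n}\Tr \bG_i(z)$. The hypotheses of the theorem give only $\E [X \otimes X] = I_p$, so Hanson-Wright is not directly applicable and one cannot expect sub-Gaussian tails. The standard remedy is to precede the argument with a truncation of the entries of $X_i$ at a level that grows slowly with $n$, to verify via a Hoffman-Wielandt / Frobenius-norm comparison (in the spirit of the argument already used in the body of the paper) that the truncated ensemble has the same limiting spectral distribution, and then to apply the concentration inequality to the bounded truncated matrix. Once this is done, the martingale bounds and the Sherman-Morrison identity combine with continuity of the quadratic fixed-point equation to deliver the limit.
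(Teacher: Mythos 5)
The paper offers no proof of this statement: it is quoted from \cite[Theorem 3.6]{BaiSil10} (in fact quoted too liberally, see below), so the comparison is with the standard literature argument. Your outline is the classical resolvent/Stieltjes-transform proof; Steps 1 and 3 and the Sherman--Morrison identity in Step 2 are correct as stated, and the rank argument for the atom at $0$ is fine. But there is a genuine gap exactly at the point you yourself flag as ``the main obstacle,'' and your proposed remedy does not close it. Under the hypotheses as written --- only $\E[X]=0$ and $\E[X\otimes X]=I_p$ --- the quadratic-form concentration $\frac1n X_i^\top\bG_i(z)X_i-\frac1n\Tr\bG_i(z)\to0$ is false, and so is the theorem itself: take $X=\sqrt{p}\,\epsilon\,e_J$ with $\epsilon$ a Rademacher sign and $J$ uniform on $[p]$, independent of each other. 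Then $\E[X]=0$ and $\E[X\otimes X]=I_p$, yet $\frac1n\sum_i X_i\otimes X_i$ is diagonal with entries $\frac{p}{n}N_k$ where $N_k$ is the number of indices $i$ with $J_i=k$; these counts are asymptotically Poisson with mean $1/\gamma$, so the empirical spectral distribution converges to the law of $\gamma\,Z$ with $Z$ Poisson, not to Mar\v{c}enko--Pastur. Truncating the entries of such an $X$ changes nothing, because truncation controls the size of individual coordinates while the failure here is one of \emph{joint} concentration: without independence of the coordinates (or a surrogate such as the Bai--Zhou condition that $\E\big|X^\top AX-\Tr A\big|^2=o(n^2)$ uniformly over contractions $A$, or convex concentration of $X$), the variance of $X^\top\bG_i X$ need not be $o(n^2)$.

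The repair is to restore the hypothesis actually present in \cite[Theorem 3.6]{BaiSil10}, namely that the coordinates of $X$ are i.i.d.\ with mean zero and unit variance. With that in hand your plan goes through: truncate coordinates at a slowly growing level, use the Frobenius/Hoffman--Wielandt comparison (Lemma~\ref{thm:hoffwiel}) to check the truncation does not move the limiting spectral distribution, and then invoke the standard moment bound for quadratic forms in independent entries (Lemma B.26 of \cite{BaiSil10}) to get the concentration needed in Step 2; the martingale argument, the self-consistent equation $z\gamma m^2-(1-\gamma-z)m+1=0$, and Stieltjes inversion then deliver the stated density. Two further remarks: Bai and Silverstein prove their Theorem 3.6 by the moment method, reserving the resolvent argument you describe for the generalized theorem with a population covariance, so your route is a legitimate alternative rather than a reproduction; and the misstatement is harmless for this paper, since the theorem is applied only to the vectors $U_i$, whose coordinates are independent $\mathsf{Unif}([-1,1])$ and hence satisfy the i.i.d.-entry hypothesis.
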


\begin{lemma}[{\cite[Corollary A.41]{BaiSil10}}]
\label{thm:hoffwiel} Let $A$ and $B$ be two $p\times p$ normal
matrices, with empirical spectral distributions $\hat \mu^A$ and $\hat
\mu^B$. Then
\[ 
L(\hat \mu^A,\hat \mu^B)^3 \leq \frac{1}{p} \|A
-B\|_{\mathrm{F}}^2,
\] 
where $L(\hat \mu^A,\hat \mu^B)$ is the L\'{e}vy distance between
the distribution functions $\hat \mu^A$ and $\hat \mu^B$.
\end{lemma}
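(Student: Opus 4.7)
The plan is to reduce the L\'{e}vy-distance bound to a purely combinatorial inequality between sorted eigenvalue sequences, which I then feed with the Hoffman-Wielandt inequality. Since the paper's application only involves symmetric matrices, I describe the argument in the Hermitian case (real spectra); the general normal case proceeds analogously after replacing the sorted pairing with the optimal permutation from Hoffman-Wielandt and the L\'{e}vy distance with its two-dimensional L\'{e}vy-Prokhorov analogue.

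First, I would order the eigenvalues $\lambda_1(A) \leq \cdots \leq \lambda_p(A)$ and $\lambda_1(B) \leq \cdots \leq \lambda_p(B)$ and invoke the Hoffman-Wielandt inequality to obtain
\[
\sum_{i=1}^p \bigl(\lambda_i(A) - \lambda_i(B)\bigr)^2 \;\leq\; \|A - B\|_{\mathrm{F}}^2.
\]
This matrix-analytic input is essentially off-the-shelf.

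The heart of the argument is the combinatorial claim that for any sorted real sequences $a_1 \leq \cdots \leq a_p$ and $b_1 \leq \cdots \leq b_p$ with empirical distribution functions $F_a, F_b$,
\[
L(F_a, F_b)^3 \;\leq\; \frac{1}{p}\sum_{i=1}^p (a_i - b_i)^2.
\]
I would prove this by contraposition. If $L(F_a, F_b) > \epsilon$, the definition of L\'{e}vy distance produces some $x \in \R$ with $F_b(x) > F_a(x+\epsilon) + \epsilon$ or the symmetric inequality (swap $a$ and $b$); the two cases are identical, so assume the first. Let $k := p F_b(x)$, so $b_1, \ldots, b_k \leq x$ by sorting. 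The assumed inequality forces $|\{i : a_i \leq x+\epsilon\}| < k - p\epsilon$, hence strictly more than $p\epsilon$ of the indices $i \leq k$ must satisfy $a_i > x+\epsilon$. For each such $i$, sorting gives $b_i \leq b_k \leq x$, whence $|a_i - b_i| > \epsilon$; summing squares yields $\sum_i (a_i - b_i)^2 > p\epsilon^3$.

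Chaining the two displays with $a_i = \lambda_i(A)$ and $b_i = \lambda_i(B)$ produces the desired bound. I expect the main obstacle to be the combinatorial step rather than Hoffman-Wielandt: the delicate point is arguing that the sorted pairing is indeed the one that controls the L\'{e}vy distance, and tracking boundary indices cleanly so as not to lose the cubic exponent in the passage from the $\sum (a_i-b_i)^2$ lower bound back to an upper bound on $L$.
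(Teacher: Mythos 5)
The paper does not prove this lemma; it imports it verbatim as Corollary A.41 of Bai and Silverstein \cite{BaiSil10}, so you are reconstructing the cited proof rather than competing with one in the text. Your reconstruction is correct and is essentially the standard argument: Hoffman--Wielandt supplies $\sum_i(\lambda_i(A)-\lambda_i(B))^2\le\|A-B\|_{\mathrm{F}}^2$, and the contraposition in your combinatorial step is sound (if $F_b(x)>F_a(x+\epsilon)+\epsilon$ then strictly more than $p\epsilon$ indices $i\le k$ have $a_i>x+\epsilon$ while $b_i\le x$, so $\frac{1}{p}\sum_i(a_i-b_i)^2>\epsilon^3$). One remark: the point you flag as delicate --- that the sorted pairing is the one that controls the L\'{e}vy distance --- is actually a non-issue, because the combinatorial inequality holds for an \emph{arbitrary} pairing of the two spectra. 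Indeed, if $\frac{1}{p}\sum_i|a_i-b_{\pi(i)}|^2\le\epsilon^3$ for some permutation $\pi$, then by Markov's inequality fewer than $p\epsilon$ indices satisfy $|a_i-b_{\pi(i)}|>\epsilon$, and the remaining points are displaced by at most $\epsilon$, which directly yields $F_b(x)\le F_a(x+\epsilon)+\epsilon$ and the symmetric bound, hence $L\le\epsilon$. This permutation-free version is how \cite{BaiSil10} handles the general normal case, where Hoffman--Wielandt only provides \emph{some} optimal permutation of complex eigenvalues and no sorted order exists; it also covers the two-dimensional L\'{e}vy distance with no extra work. For the application in this paper all matrices are real symmetric, so your Hermitian version is all that is actually used.
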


\bibliographystyle{alphaabbr}
\bibliography{kendall}

\end{document}